\newtheorem{thm}{Theorem}[section]
\numberwithin{thm}{section}
\newtheorem{lem}[thm]{Lemma}
\newtheorem{rem}[thm]{Remark}
\newtheorem{prop}[thm]{Proposition}
\newtheorem{cor}[thm]{Corollary}
\newtheorem{que}[thm]{Question}
\newtheorem {ex}[thm]{Example}
\newcommand{\ve}{\varepsilon}
\newcommand{\wt}{\widetilde}
\newcommand{\ov}{\overline}
\newcommand{\mg}{\marginpar}
\newcommand{\N}{{\mathbb{N}}}
\newcommand{\R}{{\mathbb{R}}}
\newcommand{\vG}{\Gamma}
\newcommand{\mcL}{{\mathcal L}}
\newcommand{\mcI}{{\mathcal I}}
\newcommand{\mcH}{\mathcal H}
\newcommand{\vPh}{\Phi}
\begin{document}
\begin{flushright}
This work concludes a research cycle, \\but not the friendship that has tied us.\\ You left us, dear Mimmo, too soon.\\ The disease has won,  but your memories\\ will always be with us.
\end{flushright}
\begin{center}
\Large \bf Multi-integrals of finite variation
\\
\vskip.5cm \large
D. Candeloro,  L. Di Piazza,  K. Musia{\l},    
A.R. Sambucini
\vskip.5cm
\end{center}

{\bf \small \noindent Abstract}
The aim of this paper is to investigate  different types of multi-integrals of finite variation
 and to obtain decomposition results.\\

{\noindent \bf \small Keywords}  Finite interval variation, multivalued integral, decomposition of multifunctions\\

{\noindent \bf \small MSC}
28B20,  26E25, 26A39, 28B05, 46G10,  54C60,  54C65\\

\section{Introduction}\label{intro}
  In \cite{f1994} was proved that a Banach space valued function is McShane integrable if and only if it is Pettis and Henstock integrable. That result has been then generalized to compact valued multifunctions $\Gamma$ (see \cite{dp}), weakly compact valued multifunctions (see \cite{cdpms2016b}) and bounded valued multifunctions (see \cite{cdpms2018a}). Di Piazza and  Marraffa \cite{dp-ma} presented an example of a Pettis and variationally Henstock integrable function that is not variationally McShane integrable (= Bochner integrable in virtue of \cite[Lemma 2]{dpm-ill}). It turns out that Fremlin's theorem can be formulated for variational integrals  if and only if  the variation of the integral is finite in the following sense:
$$
\sup \left\{\sum_i \left\|\int_{I_i} \Gamma  \right\|\colon \{I_1,\ldots,I_n\} \;\mbox{\rm is a finite partition of } [0,1] \right\}\, < +\infty.
$$
Finally, in the last section, using $DL$ or $Db$ conditions we are able to prove that the scalar integrability of a multifunction can be obtained as a traslation of the Pettis integrability  (Theorem \ref{p5}), while its Henstock integrability under $DL$ condition is obtained using Birkhoff integrability  (Theorem \ref{aB}),
both results with integrals of finite variation.\\
This article is the last in which Domenico Candeloro was able to cooperate and to give his personal contribution, always precious, and we want to dedicate it to him, in his memory.

\section{Preliminaria}
\label{sec:1}

Throughout   $X$ is a Banach space with norm  $\| \cdot \|$ and  its dual $X^*$.
The closed unit ball of $X$ is denoted by $B_X$. The symbol
 $c(X)$ denotes the collection of all
nonempty closed convex subsets of $X$ and $cb(X),\,cwk(X)$ and $ck(X)$  denote respectively
the family of all bounded, weakly compact and compact members of $c(X)$.  For every $C \in c(X)$ the
{\it  support function of}   $\, C$ is denoted by $s( \cdot, C)$ and
defined on $X^*$ by $s(x^*, C) = \sup \{ \langle x^*,x \rangle \colon  \ x
\in C\}$, for each $x^* \in X^*$.  $\|C\|_h= d_H(C, \{ 0\}) :=\sup\{\|x\|: x\in{C}\}$  and $d_H$ is the Hausdorff metric on the hyperspace $cb(X)$.
The map  $i:cb(X)\to \ell_{\infty}(B_{X^*})$
 given by $i(A):=s(\cdot, A)$ is the  R{\aa}dstr\"{o}m embedding (see, for example, \cite[Theorem 3.2.9 and Theorem 3.2.4(1)]{Beer},
\cite[Theorem II-19]{CV}, or \cite{L1}).\\
 $\mcI$ is the collection of all closed subintervals of the unit interval $[0,1]$.
 All functions investigated  are defined on the unit interval $[0,1]$
 endowed with Lebesgue measure $\lambda$ and Lebesgue measurable sets $\mathcal{L}$.\\
A map $\vG: [0,1]\to c(X)$ is called a {\it multifunction}.
In the sequel, given a multifunction $\vG:[0,1]\to c(X)$, we  set
$D_{\vG}(t):=\mbox{diam }(\vG(t)),$
for all $t\in [0,1]$. We say that $\vG$ satisfies the
\begin{description}
	\item[({\em $Db$-condition})] if ${\rm sup\,ess}_t D_{\vG}(t)<\infty$;	
	\item[({\em $DL$-condition})] if $\overline{\int}_0^1 D_{\vG}(t)dt<+\infty$ (where $\overline{\int}$ denotes the upper integral).
\end{description}
We recall  that a  multifunction $\vG:[0,1]\to{c(X)}$ is said to be {\it integrably bounded} if there is a  function $h\in{L_1[0,1]}$ such that $\|\vG(t)\|_h \leq |h(t)|$ for almost all $t\in[0,1]$.
We have always $D_{\vG}(t)\leq 2|\vG(t)|$. Hence, if $\vG$ is integrably bounded, then $\vG$ satisfies $DL$. If $\vG(t)\ni{0}$ for almost every $t\in[0,1]$, then $|\vG(t)|\leq D_{\vG}(t)$ a.e. Each function $g:[0,1]\to{X}$,  considered as a $ck(X)$-valued multifunction, trivially satisfies the $Db$ property. \\

We recall that if $\Phi:\mathcal{L} \to Y$ is an additive vector measure with values in a normed space $Y$,  then the {\em variation} of $\Phi$ is the extended   non negative function $|\Phi|$ whose value on a set $E \in \mathcal{L}$ is given by
$|\Phi|(E) = \sup_{\pi} \sum_{A \in \pi} \|\Phi(A)\|, $
where the supremum is taken over all partitions $\pi$ of $E$ into a finite number of pairwise disjoint members of $\mathcal{L}$. If $|\Phi| < \infty$, then $\Phi$ is called a measure of finite  variation.
\\
If the measure $\Phi$ is defined only on $\mcI$, the finite partitions considered in the definition of variation are composed by intervals. In this case we will speak of {\em  finite interval variation} and we will use the symbol $\widetilde{\Phi}$, namely:
$$
\widetilde{\Phi}([0,1])=\sup \{ \sum_i\|\Phi(I_i)\| \colon \{I_1,\ldots,I_n\}\,  \mbox{is a finite interval partition  of }[0,1]\}.
$$
If  $Y$ is a metric space, for example $(cb(X),d_H)$, which is a near vector space in the sense of \cite{L1}, and
$\Phi:\mcI\to{cb(X)}$ is additive we  consider in  its interval variation the distance $d_H (\Phi(A), \{0\})$ instead of $\|\Phi(A)\|$.\\
 We recall here briefly the definitions of integrals involved in this article.
 A scalarly integrable multifunction $\vG:[0,1]\to c(X)$ is {\it Dunford integrable} in a non-empty family $\mathcal C\subset{c(X^{**})}$,   if
for every  set $A \in \mcL$ there
exists a set $M_{\vG}^D(A)\in {\mathcal C}$  such that
\[
s(x^*,M_{\vG}^D(A))=\int_A s(x^*,\vG)\,d\lambda\,,\;\mbox{for every}\; x^*\in X^*.
\]
If $M_{\vG}^D(A)\subset{X}$ for every $A\in\mcL$, then $\vG$ is called {\it Pettis integrable}. 
 We write it as  $(P)\int_A\vG\,d\mu$ or $M_{\vG}(A)$.
	We say that a Pettis integrable $\vG: [0,1]\to c(X)$ is {\it strongly Pettis
		integrable}, if $M_{\vG}$ is an $h$-multimeasure (i.e. it is countably additive in the Hausdorff metric).
\\
 A multifunction $\vG:[0,1]\to cb(X)$ is said to be {\it Henstock} (resp. {\it McShane})
   integrable on $[0,1]$,  if there exists  $\Phi{}_{\vG}([0,1]) \in cb(X)$
    with the property that for every $\varepsilon > 0$ there exists a gauge $\delta: [0,1] \to \mathbb{R}^+$
such that for each  Perron partition (resp. partition)
   $\{(I{}_1,t{}_1), \dots,(I{}_p,t{}_p)\}$ of
   $[0,1]$ with $I_i \subset [t_i - \delta(t_i), t_i + \delta(t_i)]$ for all $i$ ( i.e. $\delta$--fine),
    we have
\begin{eqnarray}\label{e14}
d_H \left(\Phi_{\vG}([0,1]),\sum_{i=1}^p\vG(t_i)\lambda (I_i)\right)<\varepsilon.
\end{eqnarray}
If the gauges above are taken to be measurable, then we
speak of $\mathcal H$ (resp. Birkhoff)-integrability on $[0,1]$.
If $I\in\mcI$, then $\Phi_{\vG}(I):=\Phi_{\vG\chi_I}[0,1]$.
Finally if, instead of formula (\ref{e14}), we have
\begin{eqnarray}\label{var}
\sum_{i=1}^p d_H \left(\Phi_{\vG}( I_i),\vG(t_i)\lambda (I_i)\right)<\varepsilon.
\end{eqnarray}
we speak about variational {\it Henstock} (resp. {\it McShane})
   integrability on $[0,1]$. In all the cases $\Phi_{\vG}: \mcI \to cb(X)$ is an  additive interval multimeasure.\\
   Thanks to the  R{\aa}dstr\"{o}m embedding, a multifunction $\Gamma$ is "gauge" integrable (in one of the previous types) if and only if its image $i \circ \Gamma$ in $l_{\infty}(B_{X^*})$ is integrable in the same way.\\
 A multifunction $\vG:[0,1]\to{cb(X)}$ is said to be Henstock-Kurzweil-Pettis (or HKP) integrable in $cb(X)$ if it is scalarly Henstock-Kurzweil (or HK)-integrable  and for each $I\in\mcI$ there exists a set $N_{\vG}(I)\in cb(X)$ such that
 $
 s(x^*,N_{\vG}(I))=(HK) \int_Is(x^*,\vG)\quad$ for every $x^*\in{X^*}.$
 If an HKP-integrable $\vG$ is scalarly integrable, then it is called {\it weakly McShane integrable} (or wMS).

We recall that a function $f:[0,1]\to\R$ is Denjoy-Khintchine (DK) integrable (\cite[Definition 11]{gordon-p}), if there exists an ACG function (cf. \cite{Gor})  F such that its approximate derivative is almost everywhere equal to $f$. \\
A multifunction $\vG:[0,1]\to{cb(X)}$ is Denjoy-Khintchine-Pettis (DKP)
 integrable in
 a non empty family $\mathcal C$ in $cb(X)$, if for each $x^*\in{X^*}$ the function $s(x^*,\cdot)$ is Denjoy-Khintchine integrable and for every $I\in\mcI$ there exists $C_I\in\mathcal C$ with $(DK)\int_Is(x^*,\vG)=s(x^*,C_I)$, for every $x^*\in{X^*}$.
\\
As regards other definitions
 of measurability and  integrability  that will be treated here and are not explained and the known relations among them, we refer to \cite{cdpms2016,cdpms2016a,cdpms2016b,cdpms2019a,cs2014,ccgs,ccgs2019,ckr2,eh,dms,nara},    in order do not burden the presentation.

\section{Multimeasures of finite variation}\label{sec:2}
We begin with a known fact.
\begin{lem}\label{l1}
If $f:[0,1]\to\R$ is the Denjoy-Khintchine integrable and the  interval variation
of its integral is finite,  then $f$ Lebesgue integrable.
\end{lem}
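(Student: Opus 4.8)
The plan is to realize $f$ as the approximate derivative of its ACG primitive and then to read the finiteness of the interval variation as \emph{bounded variation} of that primitive. Since $f$ is Denjoy--Khintchine integrable, there is an ACG function $F$ whose approximate derivative equals $f$ almost everywhere, and the indefinite integral is the additive interval function $\Phi$ given by $\Phi([a,b]) = F(b)-F(a)$. For any finite interval partition of $[0,1]$ determined by points $0=x_0<\dots<x_n=1$ one has $\sum_i |\Phi([x_{i-1},x_i])| = \sum_i |F(x_i)-F(x_{i-1})|$, so the interval variation $\widetilde{\Phi}([0,1])$ is exactly the classical total variation of $F$ on $[0,1]$. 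Hence the hypothesis says precisely that $F$ is of bounded variation.

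Being of bounded variation, $F$ is differentiable $\lambda$-almost everywhere, its ordinary derivative $F'$ is Lebesgue integrable, and $F$ admits the Lebesgue decomposition $F = F_{ac} + F_s$, where $F_{ac}(x) = \int_0^x F'\,d\lambda$ is absolutely continuous and $F_s$ is singular, i.e.\ $F_s'=0$ a.e. The singular part $F_s = F - F_{ac}$ is again ACG, since $F$ is ACG and the absolutely continuous $F_{ac}$ is ACG, and the class of ACG functions is closed under subtraction. Moreover, $F_s$ is of bounded variation, hence approximately differentiable a.e.\ with approximate derivative equal to its ordinary derivative, which is $0$ a.e.

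The core of the argument is the uniqueness of the primitive in the Denjoy--Khintchine theory: an ACG function whose approximate derivative vanishes almost everywhere must be constant (see \cite{Gor}). Applied to $F_s$, this forces $F_s$ to be constant, so that $F = F_{ac} + \text{const}$ is absolutely continuous. Consequently $f = F'_{ap} = F'$ a.e.\ is the derivative of an absolutely continuous function, hence Lebesgue integrable, and $\int_0^x f\,d\lambda = F(x)-F(0)$, which is the desired conclusion.

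The step I expect to be the main obstacle is precisely the identification of the singular part as constant, that is, invoking that an ACG function with a.e.\ vanishing approximate derivative is constant; an equivalent shortcut is to cite directly the classical fact that a function which is simultaneously ACG and of bounded variation is absolutely continuous. Everything else is a routine translation of the definition of interval variation together with the standard Lebesgue decomposition of a function of bounded variation.
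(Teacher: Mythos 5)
Your proof is correct, but it takes a genuinely different route from the paper's. Both arguments begin the same way: the interval variation of the indefinite Denjoy--Khintchine integral is exactly the classical total variation of the ACG primitive $F$, so the hypothesis says that $F$ is BV (the paper makes this identification implicitly). From there the paper is very short: an ACG primitive is continuous (\cite[Theorem 15.8]{Gor}) and therefore satisfies Lusin's condition (N) (\cite[Theorem 6.12]{Gor}), so the Banach--Zarecki theorem (\cite[Theorem 6.15]{Gor}: continuous $+$ BV $+$ (N) $\Rightarrow$ AC) immediately gives absolute continuity of $F$; this is precisely the ``shortcut'' you mention in your last paragraph (ACG $+$ BV $\Rightarrow$ AC). You instead take the Lebesgue decomposition $F=F_{\mathrm{ac}}+F_{s}$ of the BV primitive and eliminate the singular part: $F_{s}$ is ACG (ACG functions form a linear space, and the AC function $F_{\mathrm{ac}}$ is ACG), it is BV with ordinary, hence approximate, derivative vanishing a.e., and the uniqueness theorem that makes the Khintchine integral well defined (an ACG function whose approximate derivative vanishes a.e.\ is constant) forces $F_{s}$ to be constant. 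Every step is sound. What your route buys is transparency: it exhibits the hypothesis of finite interval variation as acting exactly by killing the singular part of the primitive. What it costs is that your key lemma is the deepest ingredient of the Denjoy--Khintchine theory, and its standard proof in \cite{Gor} itself passes through condition (N) and the same Banach--Zarecki circle of ideas, so your argument is in effect a longer detour through the machinery that the paper's three citations to \cite{Gor} invoke directly.
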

\begin{proof}
Let $F$ be the Denjoy-Khintchine  primitive of
$f: [a,b] \to \R$. Then $F$ is an ACG function  and, according to  \cite[Theeorem 15.8]{Gor},
$F$ is continuous on $[a,b]$. So $F$ satisfies the condition (N)  of Lusin on in $[a,b]$ (see  \cite[Theorem 6.12]{Gor}). Since $F$ is also BV,  an application of  \cite[Theorem 6.15]{Gor} gives that $F$ is also AC on on $[a,b]$. So $f$ is Lebesgue integrable.\qed
\end{proof}

\begin{thm}\label{t1}
 Let $\Phi:\mcI\to{cb(X)}$ be the DKP-integral
 of $\vG:[0,1]\to{cb(X)}$. If $\sup_{x^*\in{B_X}}\widetilde{\langle{x^*,\Phi}\rangle}([0,1])<\infty$, then $\vG$ is weakly McShane integrable in $cb(X)$ and Gelfand integrable in $cw^*k(X^{**})$. If $\wt\Phi([0,1])<\infty$, then $\Phi$ is strongly Pettis integrable in $cb(X)$.
\end{thm}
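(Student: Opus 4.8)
The plan is to derive both conclusions from the scalar behaviour of the support functions $s(x^{*},\vG)$, using Lemma~\ref{l1} as the bridge from Denjoy--Khintchine to Lebesgue integrability, and then to reassemble the set-valued integral through the R{\aa}dstr\"om embedding $i$. \textbf{Weak McShane integrability.} Fix $x^{*}\in X^{*}$. By definition of the DKP-integral, $s(x^{*},\vG)$ is Denjoy--Khintchine integrable and its indefinite integral is the additive interval function $I\mapsto s(x^{*},\Phi(I))$, whose interval variation is $\widetilde{\langle x^{*},\Phi\rangle}([0,1])$. The hypothesis makes this finite, so Lemma~\ref{l1} gives that $s(x^{*},\vG)$ is Lebesgue integrable; as $x^{*}$ is arbitrary, $\vG$ is scalarly integrable. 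A Lebesgue integrable real function is Henstock--Kurzweil integrable with the same integral, so each $s(x^{*},\vG)$ is HK-integrable and $(HK)\int_{I}s(x^{*},\vG)=s(x^{*},\Phi(I))$ with $\Phi(I)\in cb(X)$; this is exactly HKP-integrability of $\vG$ with $N_{\vG}=\Phi$. Being HKP-integrable and scalarly integrable, $\vG$ is weakly McShane integrable in $cb(X)$ by definition.

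\textbf{Gelfand integrability.} For $A\in\mcL$ set $p_{A}(x^{*}):=\int_{A}s(x^{*},\vG)\,d\lambda$. Since each $s(\cdot,\vG(t))$ is positively homogeneous and subadditive, $p_{A}$ is a sublinear functional on $X^{*}$, and the hypothesis bounds it by $|p_{A}(x^{*})|\le\big(\sup_{\|y^{*}\|\le 1}\widetilde{\langle y^{*},\Phi\rangle}([0,1])\big)\,\|x^{*}\|$. Replacing $\vG(t)$ by $\widetilde{\vG}(t):=\overline{\vG(t)}^{\,w^{*}}\in cw^{*}k(X^{**})$ leaves every support function unchanged, so $p_{A}$ is an integral of the $w^{*}$-lower semicontinuous sublinear functionals $s(\cdot,\widetilde{\vG}(t))$. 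A bounded, sublinear, $w^{*}$-lower semicontinuous functional on $X^{*}$ is the support function of a unique nonempty bounded $w^{*}$-closed convex---hence, by Alaoglu, $w^{*}$-compact---set $M^{D}_{\vG}(A)\subset X^{**}$, which is the Gelfand integral of $\vG$ in $cw^{*}k(X^{**})$. The delicate point is that $p_{A}$ inherits $w^{*}$-lower semicontinuity from the integrand: on each ball $nB_{X^{*}}$ this is a Fatou argument, for which one dominates $s(x^{*}_{\alpha},\widetilde{\vG}(t))$ by means of the integrable selections furnished by scalar integrability.

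\textbf{Strong Pettis integrability.} Since $|s(x^{*},C)|\le\|C\|_{h}$ whenever $\|x^{*}\|\le 1$, one has $\widetilde{\langle x^{*},\Phi\rangle}([0,1])\le\wt\Phi([0,1])$, so the stronger hypothesis $\wt\Phi([0,1])<\infty$ entails the previous one and the first two conclusions hold. I then pass to $g:=i\circ\vG:[0,1]\to\ell_{\infty}(B_{X^{*}})$; its indefinite interval integral is $i\circ\Phi$, and because $i$ is a $d_{H}$-isometry one has $\widetilde{i\circ\Phi}([0,1])=\wt\Phi([0,1])<\infty$. By the first part $g$ is wMS, hence its integral is countably additive against every evaluation $x^{*}\in B_{X^{*}}$; combined with finite variation this upgrades, via an Orlicz--Pettis type argument, to norm countable additivity of an $\ell_{\infty}(B_{X^{*}})$-valued measure extending $i\circ\Phi$ to $\mcL$. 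As $g$ takes values in the closed convex cone $i(cb(X))$ (closed because $i$ is an isometry and $(cb(X),d_{H})$ is complete), the mean-value inclusion keeps $\int_{A}g\in i(cb(X))$, so $M_{\vG}(A):=i^{-1}(\int_{A}g)\in cb(X)$ is the genuine Pettis integral of $\vG$; transporting norm countable additivity back through $i$ makes $M_{\vG}$ countably additive for $d_{H}$, i.e. an $h$-multimeasure, which is strong Pettis integrability.

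\textbf{Main obstacle.} The two genuinely non-routine points are the semicontinuity/compactness step---showing the Gelfand integral sets are $w^{*}$-compact and not merely bounded convex---and, for the last claim, upgrading weak$^{*}$ to norm countable additivity while keeping the values inside the cone $i(cb(X))$. Both are precisely where the finite-variation hypothesis does the real work, turning the a priori $X^{**}$-valued, merely weak$^{*}$-additive object into a genuinely $cb(X)$-valued, $d_{H}$-countably additive multimeasure.
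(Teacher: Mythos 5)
Your first part is correct and coincides with the paper's own argument: Lemma~\ref{l1} applied to each $s(x^*,\vG)$ gives scalar Lebesgue integrability, and together with the HKP-integrability witnessed by $\Phi$ this is precisely the definition of weak McShane integrability. The Gelfand part, however, contains a step that is both unnecessary and incorrectly justified. Since the sets you want live in $X^{**}$, the relevant duality is $\langle X^{**},X^*\rangle$: by Hahn--Banach, for every $x_0^*$ there is a linear $f\le p_A$ with $f(x_0^*)=p_A(x_0^*)$, and the bound $|p_A(x^*)|\le M\|x^*\|$ forces $f\in X^{**}$; hence $C_A:=\{x^{**}\in X^{**}\colon x^{**}\le p_A \mbox{ on } X^*\}$ is nonempty, bounded, $w^*$-closed, so $w^*$-compact by Alaoglu, and its support function is exactly $p_A$. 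No $\sigma(X^*,X)$-lower semicontinuity is needed (that condition would be needed to place $C_A$ inside $X$, not $X^{**}$), and your proposed way of obtaining it would not work anyway: on the balls $nB_{X^*}$ the $w^*$-topology is in general non-metrizable, convergence is along nets, and Fatou's lemma fails for nets. The paper sidesteps this entire issue by quoting \cite[Theorem 3.2]{cdpms2018a}.

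The genuine gap is in the strong Pettis part. What you actually know is that $\langle e_{x^*},i\circ\Phi\rangle$ is countably additive for each evaluation $e_{x^*}$, $x^*\in B_{X^*}$; these form a norming but very small subset of $\ell_\infty(B_{X^*})^*$, so no Orlicz--Pettis theorem applies (it requires countable additivity against the whole dual), and for the same reason your claim that $g=i\circ\vG$ is itself wMS is unjustified. More seriously, the measure on $\mcL$ whose existence you assert (``extending $i\circ\Phi$ to $\mcL$'') is never constructed. The hypothesis is finiteness of the \emph{interval} variation, so the absolute-convergence argument --- $\sum_i\|\Phi(I_i)\|_h\le\wt\Phi([0,1])<\infty$ for non-overlapping intervals, completeness of $(cb(X),d_H)$, identification of the sum through the scalar measures --- yields $\sigma$-additivity only on the algebra $\mathfrak{J}$ generated by $\mcI$ (this much is exactly the paper's argument); it cannot be run for a disjoint sequence of arbitrary measurable sets, because the variation over measurable partitions is not known to be finite. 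Passing from $\sigma$-additivity on $\mathfrak{J}$ to a countably additive measure on the Borel sets, and then to $\mcL$, is a genuine step: the paper does it by observing that finite variation makes the restriction strongly additive (\cite[Proposition I.15]{du}) and then invoking the extension theorem of \cite{katz} or \cite[Theorem I.5.2]{du}, identifying the extension with the Gelfand integral $\Psi$ via the scalar measures, and only then applying the closed-cone argument (the one piece of this part you do have). Without such an extension theorem, or an equivalent device, your chain of implications does not close.
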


\begin{proof}
By Lemma \ref{l1}  $\vG$ is wMS-integrable in $cb(X)$. According to \cite[Theorem 3.2]{cdpms2018a} it is Gelfand integrable in $cw^*k(X^{**})$. Denote the Gelfand integral by $\Psi$.  \\
Assume now that $\wt\Phi ([0,1])<\infty$.
 If $\{I_i : i \in \mathbb{N} \}$ is a sequence of non-overlapping subintervals of $[0,1]$, then
$$
\sum_i\|\Phi(I_i)\|_h \leq \wt \Phi([0,1])<\infty
$$
and so, due to the completeness of $cb(X)$ under Hausdorff distance,
 the series $\sum_i\Phi(I_i)$ is convergent in $cb(X)$.\\
 But for each $x^*\in{X^*}$ the function $s(x^*,\Psi)$ is a measure and so
$\sum_is(x^*,\Phi(I_i))=s(x^*,\Psi(\bigcup_iI_i))$. Since the sum of $\sum_i\Phi(I_i)$ is uniquely determined, we have
 $$
  \Psi(\bigcup_i I_i)=\sum_i\Phi(I_i)\in{cb(X)}\,.
    $$

  It follows that $\Psi$ is $\sigma$-additive (in the Hausdorff metric) on the algebra $\mathfrak J$ generated by $\mcI$. Hence, also $i\circ\Psi$ is $\sigma$-additive on 
  $\mathfrak J$.
  But  $\wt{i\circ\Psi}([0,1])=\wt{\Psi}([0,1])=\wt\Phi([0,1])<\infty$ and so due to \cite[Proposition I.15]{du}, $i\circ\Psi$ restricted to $\mathfrak J$ is  strongly additive.  \\
It is a consequence of \cite{katz} or \cite[Theorem I.5.2]{du} that $i\circ\Psi$ is a measure on the $\sigma$-algebra of Borel subsets of $[0,1]$. But  $i\circ\Psi(E)=0$, provided  Lebesgue measure vanishes on $E$ and consequently, $i\circ\Psi$ is measure on $\mcL$. Since $i(cb(X))$ is a closed cone  also $\Psi$ is a measure in the Hausdorff metric of $cb(X)$ and therefore $\vG$ is strongly Pettis integrable on $\mcL$.
\qed
\end{proof}

\begin{cor}
If $\vG\colon [0,1]\to{c(X)}$ is Pettis integrable in $cb(X)$, $M_{\Gamma}$ is its indefinite Pettis integral and $|M_{\Gamma}|([0,1])<\infty$, then $\vG$ is strongly Pettis integrable.
\end{cor}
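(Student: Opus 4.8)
The plan is to read the statement as a direct specialization of Theorem \ref{t1}, so the real content is merely to check that the hypotheses of that theorem are in force. First I would record that Pettis integrability in $cb(X)$ forces DKP integrability. Indeed, by assumption each $s(x^*,\vG)$ is Lebesgue integrable and, for every $I\in\mcI$, the set $M_{\Gamma}(I)\in cb(X)$ satisfies $s(x^*,M_{\Gamma}(I))=\int_I s(x^*,\vG)\,d\lambda$ for all $x^*\in X^*$. Since every Lebesgue integrable scalar function is Denjoy--Khintchine integrable with coinciding values, each $s(x^*,\vG)$ is DK integrable and $(DK)\int_I s(x^*,\vG)=s(x^*,M_{\Gamma}(I))$. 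Hence $\vG$ is DKP integrable and its DKP-integral is precisely $\Phi:=M_{\Gamma}|_{\mcI}\colon\mcI\to cb(X)$.

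Next I would convert the finite-variation hypothesis into the two quantities appearing in Theorem \ref{t1}. Because interval partitions of $[0,1]$ form a subfamily of the finite measurable partitions used to define $|M_{\Gamma}|$, one immediately has $\wt\Phi([0,1])\le |M_{\Gamma}|([0,1])<\infty$. Moreover, for every $x^*\in B_{X^*}$ and every $C\in cb(X)$ one has $|s(x^*,C)|\le\|C\|_h$, so for any interval partition $\{I_1,\dots,I_n\}$ of $[0,1]$ the estimate $\sum_i|s(x^*,\Phi(I_i))|\le\sum_i\|\Phi(I_i)\|_h\le\wt\Phi([0,1])$ holds. Taking suprema gives $\sup_{x^*\in B_{X^*}}\widetilde{\langle x^*,\Phi\rangle}([0,1])\le\wt\Phi([0,1])<\infty$. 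Thus both hypotheses of Theorem \ref{t1} are met: the first yields the wMS and Gelfand integrals (in particular the Gelfand integral $\Psi$ used in its proof), and the second then upgrades $\Psi$ to a genuine $h$-multimeasure.

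Invoking Theorem \ref{t1} now delivers that $\vG$ is strongly Pettis integrable, which is the assertion. There is no serious obstacle here; the single point deserving a word of care is the codomain mismatch, since the corollary states $\vG\colon[0,1]\to c(X)$ while Theorem \ref{t1} is phrased for $cb(X)$-valued multifunctions. This is harmless, because the entire argument uses only the values $\Phi(I)=M_{\Gamma}(I)\in cb(X)$ of the integral together with the scalar functions $s(x^*,\vG)$, both of which are well behaved under the Pettis hypothesis, so the reduction to Theorem \ref{t1} goes through unchanged.
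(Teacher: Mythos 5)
Your overall strategy coincides with the paper's own proof: note that interval partitions form a subfamily of the measurable partitions, so $\wt{M_{\Gamma}}([0,1])\le|M_{\Gamma}|([0,1])<\infty$; note that Pettis integrability yields DKP integrability with $\Phi=M_{\Gamma}|_{\mcI}$ (Lebesgue integrable scalar functions are Denjoy--Khintchine integrable with the same primitives); and then invoke Theorem~\ref{t1}. Your auxiliary estimate $\sup_{x^*\in B_{X^*}}\widetilde{\langle x^*,\Phi\rangle}([0,1])\le\wt{\Phi}([0,1])$, via $|s(x^*,C)|\le\|C\|_h$, is also correct. The genuine gap is exactly the point you flag and then dismiss: the codomain mismatch. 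Theorem~\ref{t1} is stated for $\vG:[0,1]\to cb(X)$, and you cannot legitimately invoke it for a $c(X)$-valued multifunction by asserting that its proof ``uses only the values $\Phi(I)$ and the scalar functions.'' In fact the proof of Theorem~\ref{t1} first concludes that $\vG$ itself is weakly McShane integrable in $cb(X)$ and then appeals to \cite[Theorem 3.2]{cdpms2018a} to obtain Gelfand integrability of $\vG$; both the notion of weak McShane integrability in this paper and that cited result are formulated for the multifunction, not merely for its integral, and whether they tolerate unbounded values is precisely what would need to be checked. Declaring that the argument ``goes through unchanged'' amounts to claiming a stronger, unproved version of Theorem~\ref{t1}, not to applying the stated one.

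The paper closes this gap differently and more cheaply: it first observes that the finite-variation hypothesis itself forces $\vG$ to take bounded values almost everywhere --- this observation is the only real content of the corollary's proof beyond the inequality $\wt{M_{\Gamma}}([0,1])\le|M_{\Gamma}|([0,1])$ --- so that, after modifying $\vG$ on a null set (which changes neither the scalar integrals nor $M_{\Gamma}$), one may assume $\vG:[0,1]\to cb(X)$ and apply Theorem~\ref{t1} exactly within its stated hypotheses. Your proposal is missing this reduction; once it is inserted in place of your last paragraph, the rest of what you wrote is correct and matches the paper's argument.
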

\begin{proof} It is easily seen that due to the finite variation of $M_{\Gamma}$, the multifunction $\vG$ takes a.e. bounded values. Without loss of generality we may assume that $\vG:[0,1]\to{cb(X)}$.
 We have $\wt{M_{\Gamma}}([0,1])\leq |M_{\Gamma}|([0,1])<\infty$ and so we may apply Theorem \ref{t1}. \qed
 \end{proof}

 Under stronger assumptions one obtains stronger results. We proved in \cite{cdpms2018a} the following
 \begin{thm}\label{T11}
Let $\vG:[0,1]\to{cb(X)}$ be Henstock (or $\mcH$) integrable and let $\vPh_{\vG}$ be its Henstock ($\mcH$)-integral. If $\wt{\vPh}_{\vG}[0,1]<\infty$, then $\vG$ is McShane (Birkhoff) integrable.
\end{thm}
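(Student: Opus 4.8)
The plan is to reduce the statement to the conjunction of two facts: that $\vG$ is (strongly) Pettis integrable, and that a $cb(X)$-valued multifunction which is simultaneously Henstock and Pettis integrable is McShane integrable — the multivalued version of Fremlin's theorem established in \cite{cdpms2018a} (generalizing \cite{f1994}). The whole problem then collapses to producing Pettis integrability out of the single hypothesis that $\wt{\vPh}_{\vG}[0,1]<\infty$.

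First I would observe that a Henstock integrable $\vG$ is automatically $DKP$ integrable with the \emph{same} interval primitive. Indeed, applying the support function $s(x^*,\cdot)$ to the defining inequality (\ref{e14}) and using that $|s(x^*,A)-s(x^*,B)|\le d_H(A,B)$ for $x^*\in B_{X^*}$, together with the additivity of $s(x^*,\cdot)$ under Minkowski sums and nonnegative dilations, one obtains
$$
\left| s(x^*,\vPh_{\vG}([0,1]))-\sum_{i=1}^p s(x^*,\vG(t_i))\,\lambda(I_i)\right|<\ve .
$$
Hence each $s(x^*,\vG)$ is Henstock--Kurzweil integrable with $(HK)\int_I s(x^*,\vG)=s(x^*,\vPh_{\vG}(I))$, and since $HK$-integrability implies $DK$-integrability, $\vG$ is $DKP$ integrable with $\Phi=\vPh_{\vG}$ and $\mathcal C=cb(X)$.

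Now I would feed this into Theorem \ref{t1}. The hypothesis $\wt{\vPh}_{\vG}[0,1]<\infty$ is exactly $\wt\Phi([0,1])<\infty$, so Theorem \ref{t1} yields that $\vG$ is strongly Pettis integrable in $cb(X)$, and in particular Pettis integrable. (The mechanism behind this is precisely the scalar reduction of Lemma \ref{l1}: from $\wt{\langle x^*,\vPh_{\vG}\rangle}\le\|x^*\|\,\wt{\vPh}_{\vG}[0,1]<\infty$ one gets Lebesgue integrability of each $s(x^*,\vG)$, after which the finitely additive, finite-variation interval map is upgraded to a genuine $h$-multimeasure on $\mcL$.) With $\vG$ now both Henstock and Pettis integrable, the multivalued Fremlin theorem of \cite{cdpms2018a} delivers McShane integrability. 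The Birkhoff case runs verbatim: an $\mcH$ integrable $\vG$ is a fortiori Henstock integrable, so the argument above still produces strong Pettis integrability, and the measurable-gauge form of the equivalence ($\mcH$ plus Pettis implies Birkhoff) closes it.

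I expect the main obstacle to lie not in the formal chain above but in the analytic content that Theorem \ref{t1} packages: turning a merely finitely additive interval primitive of finite variation into a countably additive multimeasure on the $\sigma$-algebra (via strong additivity and a Carathéodory-type extension, together with the verification that it annihilates $\lambda$-null sets), and, on the other side, securing the genuinely measurable-gauge version of Fremlin's theorem, which is the one place where the McShane and the Birkhoff conclusions could in principle demand separate justification.
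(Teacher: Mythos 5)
Your chain of reasoning is correct, but it cannot be matched against an internal argument for the simple reason that the paper offers none: Theorem \ref{T11} is introduced with ``We proved in \cite{cdpms2018a} the following'' and is imported as a black box from that reference. What you have effectively done is reconstruct a self-contained derivation from the machinery that \emph{is} in the paper, and each link holds up. The reduction of Henstock integrability to DKP integrability with the same interval primitive is valid (apply $s(x^*,\cdot)$ to (\ref{e14}), using $|s(x^*,A)-s(x^*,B)|\le d_H(A,B)$ for $x^*\in B_{X^*}$ and the additivity of support functions under Minkowski sums and nonnegative scalars; HK-integrability of each $s(x^*,\vG)$ then implies its Denjoy--Khintchine integrability). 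Your observation that $\wt{\vPh}_{\vG}[0,1]<\infty$ implies the weaker scalar hypothesis $\sup_{x^*\in B_{X^*}}\wt{\langle x^*,\vPh_{\vG}\rangle}([0,1])<\infty$ of Theorem \ref{t1} is also right, so Theorem \ref{t1} (whose engine is exactly Lemma \ref{l1} plus the extension argument) gives (strong) Pettis integrability in $cb(X)$, and the Fremlin-type equivalence of \cite{cdpms2018a} for $cb(X)$-valued multifunctions closes the McShane case. Two remarks on presentation. First, the one dependence you should make fully explicit is the Birkhoff half: it needs the measurable-gauge version ($\mcH$ plus Pettis implies Birkhoff) of the Fremlin-type theorem for $cb(X)$-valued multifunctions; the introduction of the present paper only advertises the Henstock/McShane form, so you must check that \cite{cdpms2018a} indeed states both (it does, as the parenthetical formulation of Theorem \ref{T11} itself indicates). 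Second, your route buys something the paper's citation does not --- it exhibits Theorem \ref{T11} as a corollary of Theorem \ref{t1} --- but at the cost of invoking heavier machinery (the Denjoy--Khintchine--Pettis framework and the Carath\'eodory-type extension inside Theorem \ref{t1}) than is strictly needed, since for the final step plain Pettis integrability, not strong Pettis integrability, suffices.
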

Finally, we can formulate the characterization of variationally McShane integral in terms of the variational Henstock integral.
\begin{thm}\label{t2}
  A multifunction $\vG:[0,1]\to{cb(X)}$ is variationally McShane integrable
   if and only if it is variationally Henstock integrable and  the interval variation of the Henstock integral is finite.
\end{thm}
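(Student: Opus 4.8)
The plan is to prove both implications by passing, through the R\aa dstr\"om embedding $i:cb(X)\to\ell_{\infty}(B_{X^*})$, to the corresponding single-valued statement, where variational McShane integrability coincides with Bochner integrability by \cite[Lemma 2]{dpm-ill}. Since $i$ is an isometry from $(cb(X),d_H)$ onto a closed cone of $\ell_{\infty}(B_{X^*})$ and commutes with each of the gauge integrals, $\vG$ is variationally Henstock (resp.\ variationally McShane) integrable if and only if $f:=i\circ\vG$ is, with $i\circ\Phi_{\vG}=\Phi_{f}$; moreover $\wt{\Phi}_{\vG}([0,1])=\wt{\Phi}_{f}([0,1])$ because $i$ preserves distances (exactly as in the identity $\wt{i\circ\Psi}([0,1])=\wt{\Psi}([0,1])$ used in Theorem \ref{t1}). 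Thus it suffices to settle the equivalence for a single-valued $f:[0,1]\to\ell_{\infty}(B_{X^*})$ and then transport the conclusion back through the cone $i(cb(X))$.

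For the \emph{only if} part I would argue directly. Every Perron partition is in particular a free-tag (McShane) partition, so the variational McShane inequality (\ref{var}), read only on $\delta$-fine Perron partitions, already yields variational Henstock integrability of $\vG$ with the same interval multimeasure $\Phi_{\vG}$. To see that $\wt{\Phi}_{\vG}([0,1])<\infty$, fix the gauge $\delta$ associated with $\varepsilon=1$ and estimate, for any $\delta$-fine tagged partition $\{(I_i,t_i)\}$,
$$
\sum_i\|\Phi_{\vG}(I_i)\|_h\le\sum_i d_H\big(\Phi_{\vG}(I_i),\vG(t_i)\lambda(I_i)\big)+\sum_i\|\vG(t_i)\|_h\,\lambda(I_i),
$$
where the first sum is $<1$ and the second is bounded because variational McShane integrability forces $t\mapsto\|\vG(t)\|_h=\|f(t)\|$ to be Lebesgue integrable (equivalently, $f$ is Bochner integrable). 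Taking the supremum over interval partitions gives finiteness of the interval variation; alternatively one notes directly that $\wt{\Phi}_{\vG}([0,1])=\wt{\Phi}_{f}([0,1])\le|\Phi_{f}|([0,1])=\int_0^1\|f\|\,d\lambda<\infty$.

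The substantive direction is the \emph{if} part. Here $f$ is variationally Henstock integrable with primitive $\Phi_{f}$ of finite interval variation, and the goal is Bochner integrability of $f$. I would first invoke the structural properties of the variational Henstock integral (\cite{dp-ma}): such an $f$ is strongly measurable, and its primitive $F(x):=\Phi_{f}([0,x])$ satisfies the strong Lusin condition $(SL)$ and is a.e.\ differentiable with $F'=f$ a.e. The hypothesis $\wt{\Phi}_{f}([0,1])<\infty$ says precisely that $F$ is of bounded variation. Combining bounded variation with the $(SL)$ property upgrades $F$ to an absolutely continuous function, in complete analogy with the scalar mechanism of Lemma \ref{l1}, where condition $(N)$ together with $BV$ forces $AC$. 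An absolutely continuous, a.e.\ strongly differentiable $F$ with strongly measurable derivative $f\in L_1$ is the indefinite Bochner integral of $f$; hence $f$ is Bochner integrable, i.e.\ variationally McShane integrable, and the same holds for $\vG$ after returning through the closed cone $i(cb(X))$.

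I expect the main obstacle to lie in this \emph{if} direction, specifically in the passage from a finite-variation variational Henstock primitive to absolute continuity in the Banach-valued setting: one must verify that the strong Lusin condition enjoyed by variational Henstock primitives plays the exact role that Lusin's condition $(N)$ plays in the real-valued Lemma \ref{l1}, and that a.e.\ differentiability of a $BV$ vector primitive together with strong measurability of $f$ suffices to recover $f$ as a Bochner density (a point where the absence of a general Radon--Nikod\'ym property must be circumvented by using the differentiability already built into the variational Henstock structure). Once this single-valued characterization is in place, the R\aa dstr\"om transfer in both directions is routine.
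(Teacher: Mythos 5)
Your reduction to the single-valued case via the R{\aa}dstr\"om embedding is sound (the paper itself records that gauge integrability of $\vG$ and of $i\circ\vG$ are equivalent and, in the proof of Theorem \ref{t1}, that $i$ preserves interval variation), and your \emph{only if} direction is correct. The \emph{if} direction, however, rests on a step that you assert only by analogy and that, as written, is a genuine gap: the claim that a Banach-valued function which is $BV$ and satisfies the strong Lusin condition must be $AC$. The real-variable mechanism you invoke (Lemma \ref{l1}, i.e. Banach--Zarecki: continuity $+$ condition $(N)$ $+$ $BV$ $\Rightarrow$ $AC$) does not transfer to the vector-valued setting: its proof uses the Jordan decomposition of $F$ and the Lebesgue measure of image sets, neither of which is available for an arbitrary Banach-valued $F$, and condition $(N)$ is not even the right notion there. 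Moreover, in a space without the Radon--Nikod\'ym property a $BV$ function need not be differentiable a.e., so any correct argument must exploit the a.e. differentiability coming from variational Henstock integrability; you acknowledge the obstacle but do not supply the missing argument, and ``complete analogy with the scalar mechanism'' is exactly the point that fails. (One can in fact prove the $SL+BV\Rightarrow AC$ claim, but the only routes I see pass through countable additivity on the interval algebra, strong additivity and the Katz/Kluv\'anek extension theorem \cite{katz}, \cite{du} --- that is, you would be reproving Theorem \ref{t1}.)

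The good news is that your framework closes, and more simply than you project, because the $AC$ question can be bypassed altogether. By the results you quote (\cite{dp-ma}, \cite{bdm}), $f:=i\circ\vG$ is strongly measurable and its vH-primitive $F$ satisfies $F'=f$ a.e.\ (norm derivative). Finite interval variation means $F$ is $BV$; let $V$ be its variation function. Wherever $F'(t)$ and $V'(t)$ exist one has $\|F'(t)\|\le V'(t)$, and $V$, being monotone and bounded, is differentiable a.e.\ with $\int_0^1 V'\,d\lambda\le \wt{\Phi}_{f}([0,1])<\infty$ by Lebesgue's theorem on monotone functions. Hence $f$ is strongly measurable and dominated a.e.\ by an integrable function, so it is Bochner integrable, i.e.\ variationally McShane integrable by \cite[Lemma 2]{dpm-ill}; then transport back through the closed cone $i(cb(X))$ as you indicate (or quote \cite[Proposition 3.6]{cdpms2016}, as the paper does). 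Note that, once repaired this way, your argument is genuinely different from the paper's: the paper never differentiates, but instead applies Theorem \ref{t1} (vH-integrability plus finite interval variation yields strong Pettis integrability, via the vector-measure extension machinery) and then the classical fact that a strongly measurable Pettis integrable function whose Pettis integral has finite variation is Bochner integrable. Your differentiation route trades that measure-theoretic machinery for the structure theory of vH-primitives in \cite{bdm}, which is arguably more self-contained --- but only after the $L_1$-domination step above replaces the unproved Banach--Zarecki analogy.
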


\begin{proof}
 We need to prove only that each  vH-integrable multifunction $\vG:[0,1] \to {cb(X)}$ with integral of finite  interval variation is
 variationally McShane  integrable.
 We know already from Theorem \ref{t1} that $\vG$ is Pettis integrable. Since  $i\circ\vG$ is vH-integrable it is strongly measurable. If $M_{\vG}$ is the Pettis integral of $\vG$, then $i\circ{M_{\vG}}$ is a measure of finite variation and $ i\circ{M_{\vG}}(I)=(vH)\int_Ii\circ\vG$. It follows that $i\circ\vG$ is  Bochner integrable. Now we may apply
\cite[Proposition 3.6]{cdpms2016} to obtain variational McShane integrability of $\vG$. \qed
\end{proof}

In case of  vector valued functions $f: [0,1] \to X$,  by the properties of the Pettis and the Bochner integrals, it follows at once  that if $f$ is strongly measurable,  Pettis integrable and  its Pettis integral has  finite variation, then $f$ is Bochner integrable.  The next result is the multivalued version of  this result.

\begin{thm}\label{new}
Let $\vG: [0,1] \to cb(X)$ be
Bochner measurable, Pettis integrable, and  its Pettis integral has  finite variation. Then
  $\vG$ is  integrably bounded.
\end{thm}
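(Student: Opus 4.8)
The plan is to prove the equivalent statement that the measurable function $t\mapsto\|\vG(t)\|_h$ lies in $L_1[0,1]$: integrable boundedness then holds with $h:=\|\vG(\cdot)\|_h$. This function is measurable precisely because $\vG$ is Bochner measurable. Passing through the R{\aa}dstr\"{o}m embedding $i$ and writing $g:=i\circ\vG$, the isometry of $i$ gives $\|\vG(t)\|_h=\|g(t)\|_\infty=\sup_{x^*\in B_{X^*}}|s(x^*,\vG(t))|$, so the whole problem is to control this supremum, integrated over $[0,1]$, by the finite variation $|\MG|([0,1])$.

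First I would use strong measurability to make the supremum countable. Since $g$ is strongly measurable it is essentially separably valued, say $g(t)$ lies a.e. in a separable $Z\subset\ell_\infty(B_{X^*})$; picking a dense sequence in $Z$ and, for each of its members, evaluation functionals almost attaining the $\ell_\infty$-norm, one extracts a countable set $\{x_n^*\}\subset B_{X^*}$ with $\|\vG(t)\|_h=\sup_n|s(x_n^*,\vG(t))|$ for a.e.\ $t$. By monotone convergence it then suffices to bound $\int_0^1\max_{n\le N}|s(x_n^*,\vG)|\,d\lambda$ uniformly in $N$.

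The core step localises the maximum. I would split $[0,1]$ into finitely many measurable sets $B_{n,\sigma}$, indexed by $n\le N$ and a sign $\sigma$, on each of which a fixed $s(x_n^*,\vG(\cdot))$ realizes the maximum and keeps a constant sign (ties broken by least index). On each such piece the defining property of the Pettis integral yields $\int_{B_{n,\sigma}}\max_{m\le N}|s(x_m^*,\vG)|\,d\lambda=\bigl|\int_{B_{n,\sigma}}s(x_n^*,\vG)\,d\lambda\bigr|=|s(x_n^*,\MG(B_{n,\sigma}))|\le\|\MG(B_{n,\sigma})\|_h$, the last inequality being $|s(x^*,C)|=|i(C)(x^*)|\le\|i(C)\|_\infty=\|C\|_h$. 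Summing over the finite partition $\{B_{n,\sigma}\}$ gives $\int_0^1\max_{n\le N}|s(x_n^*,\vG)|\,d\lambda\le|\MG|([0,1])$, and letting $N\to\infty$ proves $\int_0^1\|\vG\|_h\,d\lambda\le|\MG|([0,1])<\infty$.

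The hard part is conceptual rather than computational. The naive bound runs the wrong way: since $\int\sup\ge\sup\int$, the finite-variation hypothesis, which only says $\sup_{x^*}\int_A s(x^*,\vG)\,d\lambda=\|\MG(A)\|_h$, controls $\sup\int$ and not $\int\sup$. Bochner measurability is exactly the ingredient that repairs this, letting one replace the supremum by a maximum that is \emph{attained} on a finite measurable partition, thereby turning $\int\max$ into a finite sum dominated by the variation. The only routine verifications left are the measurability of the sets $B_{n,\sigma}$ and that the chosen evaluation functionals norm $g(t)$ almost everywhere.
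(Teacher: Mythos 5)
Your proof is correct, but it follows a genuinely different route from the paper's. The paper also passes through the R{\aa}dstr\"{o}m embedding, but then proceeds in two steps that you avoid entirely: it first shows that $i\circ\vG$ is itself Pettis integrable as an $\ell_{\infty}(B_{X^*})$-valued function, following the norming-set/uniform-integrability argument of Cascales--Kadets--Rodriguez, together with the identification $i((P)\int_A\vG\,d\lambda)=(P)\int_A i\circ\vG\,d\lambda$; it then invokes the classical vector-valued fact (stated just before the theorem) that a strongly measurable, Pettis integrable function whose indefinite integral has finite variation is Bochner integrable, so that $\|i\circ\vG(\cdot)\|=\|\vG(\cdot)\|_h$ is integrable. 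You bypass any discussion of Pettis integrability of $i\circ\vG$ --- a delicate matter, since $\ell_{\infty}(B_{X^*})$ is nonseparable --- and work directly with support functions: Bochner measurability yields essential separability of the range of $i\circ\vG$, hence a countable family $\{x_n^*\}\subset B_{X^*}$ realizing $\|\vG(t)\|_h$ a.e., and then monotone convergence plus the finite measurable partition on which the running maximum is attained with constant sign converts $\int_0^1\max_{n\le N}|s(x_n^*,\vG)|\,d\lambda$ into a finite sum dominated by the variation. In effect you have inlined, at the multifunction level, a proof of the very classical theorem the paper cites, and only the Pettis integrability of $\vG$ itself (not of $i\circ\vG$) is used. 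What the paper's route buys is brevity given the cited results, plus the independently useful identity between $i$ of the integral and the integral of $i\circ\vG$; what yours buys is self-containedness and the explicit quantitative bound $\int_0^1\|\vG\|_h\,d\lambda\le|M_{\vG}|([0,1])$. Note that both arguments consume the Bochner measurability hypothesis in the same essential way, and rightly so: by Remark \ref{r1}, the multifunction $G(t)=\mathrm{conv}\{0,e_t/t\}$ is Pettis integrable with integral of zero variation, yet it is not integrably bounded, so the countable-reduction step is exactly where a weaker measurability assumption would break down.
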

\begin{proof} Since $\vG$ is Bochner measurable, it is a.e. limit of simple multifunctions.   It follows that  $i\circ\vG$ is strongly measurable. Let us assume that $Y:=\ov{span}(i\circ{\vG([0,1])})$ is a closed separable subspace of $ \ell_{\infty} (B_{X^*})$. Then, we follow the proof of \cite[Proposition 3.5]{ckr0}.
 If $e_{x^*}\in{B_{\ell_{\infty}(B_{X^*})^*}}$ is defined by $\langle{e_{x^*},g}\rangle:=g(x^*)$ for every $g\in\ell_{\infty}(B_{X^*})$, then the set $B:=\{e_{x^*}|Y:x^*\in{B_{X^*}}\}\subset B_{Y^*}$ is norming. By the Pettis integrability of $\vG$
the family ${\mathcal Z}_{i\circ\vG,B}:=\{\langle{e_{x^*},i\circ\vG}\rangle: x^*\in{B_{X^*}}\}=\{s(x^*,\vG):x^*\in{B_{X^*}}\}$ is uniformly integrable.
 Consequently, $i\circ\vG$ is a Pettis integrable function. Moreover,
$i((P)\int_A\vG\,d\lambda)=(P)\int_Ai\circ\vG\,d\lambda$ for every $A\in\mcL$ (see the proof of \cite[Proposition 3.5]{ckr0}). By the assumption the variaton of $(P)\int{i\circ\vG\,d\lambda}$ is finite and so $i\circ\vG$ is Bochner integrable. Consequently, $\vG$ is integrably bounded. \qed
\end{proof}
Then by  \cite[Proposition 3.6]{cdpms2016} (formulated for $cb(X)$-valued multifunctions)
and Theorem \ref{new} we get the following
\begin{prop}
 Let $\Gamma:[0,1]\to{cb(X)}$ be a scalarly measurable multifunction. Then the following conditions are equivalent:
 \begin{enumerate}
 \item
 $\Gamma$ is variationally McShane integrable;
  \item
  $i(\Gamma) \in  L_1(\lambda, \ell_{\infty} (B_{X^*}))$;
  \item
  $\Gamma$ is  Bochner measurable  and integrably bounded;
  \item $\Gamma$ is  Bochner measurable, Pettis integrable, and  its Pettis integral has  finite variation.
  \end{enumerate}
\end{prop}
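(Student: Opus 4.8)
The plan is to close a cycle of implications by splitting the four conditions into the block $(1)\Leftrightarrow(2)\Leftrightarrow(3)$, which is nothing but the multivalued Bochner characterization supplied by \cite[Proposition 3.6]{cdpms2016}, and the remaining pair $(3)\Leftrightarrow(4)$, whose nontrivial direction $(4)\Rightarrow(3)$ is exactly Theorem \ref{new}. Thus essentially all the real content is already available, and the proof reduces to an assembly.

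First I would settle $(1)\Leftrightarrow(2)\Leftrightarrow(3)$ through the R{\aa}dstr\"om embedding $i$. By the embedding property recalled in Section \ref{sec:1}, $\vG$ is variationally McShane integrable if and only if $i\circ\vG$ is; and for a single-valued map the variational McShane integral coincides with the Bochner integral by \cite[Lemma 2]{dpm-ill}, so $(1)$ is equivalent to $i\circ\vG\in L_1(\lambda,\ell_{\infty}(B_{X^*}))$, which is $(2)$. The equivalence $(2)\Leftrightarrow(3)$ is then the classical Bochner theorem applied to $i\circ\vG$: Bochner integrability amounts to strong measurability together with integrability of the norm $t\mapsto\|i(\vG(t))\|_{\infty}$. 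Since $i$ is an isometry and $i(\{0\})=0$, one has $\|i(\vG(t))\|_{\infty}=d_H(\vG(t),\{0\})=\|\vG(t)\|_h$, so integrability of this norm is precisely the integrable boundedness of $\vG$, while strong measurability of $i\circ\vG$ is the Bochner measurability of $\vG$. This is the content of \cite[Proposition 3.6]{cdpms2016} once it is read for $cb(X)$-valued multifunctions.

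It remains to fold in condition $(4)$. For $(3)\Rightarrow(4)$ I would argue directly: integrable boundedness gives $\|\vG(t)\|_h\le h(t)$ with $h\in L_1$, whence the $DL$-condition, Pettis integrability (already guaranteed by the equivalence with $(1)$, since variational McShane integrability implies Pettis integrability), and, for every $A\in\mcL$ and $x^*\in B_{X^*}$,
\[
|s(x^*,M_{\vG}(A))|=\Big|\int_A s(x^*,\vG)\,d\lambda\Big|\le\int_A\|\vG\|_h\,d\lambda ,
\]
so that $\|M_{\vG}(A)\|_h=\sup_{x^*\in B_{X^*}}|s(x^*,M_{\vG}(A))|\le\int_A h\,d\lambda$; summing over any finite partition of $[0,1]$ yields $|M_{\vG}|([0,1])\le\int_0^1 h\,d\lambda<\infty$, i.e. the Pettis integral has finite variation. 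The reverse implication $(4)\Rightarrow(3)$ is Theorem \ref{new} verbatim, since condition $(4)$ is exactly its hypothesis and integrable boundedness its conclusion, with Bochner measurability carried along in both statements.

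The main obstacle is not in this assembly but upstream, in Theorem \ref{new}: the delicate point is that Pettis integrability together with finite variation of the Pettis integral does not by itself force integrable boundedness without a measurability hypothesis, and it is precisely Bochner measurability that lets one transport the problem, through the isometry $i$, to the single-valued Bochner setting, where finite variation of the vector Pettis integral does imply Bochner integrability. Once Theorem \ref{new} is granted, no further difficulty arises here.
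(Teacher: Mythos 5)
Your proof is correct and takes essentially the same approach as the paper: the paper's own one-line proof simply invokes Theorem \ref{new} together with the argument of \cite[Proposition 3.6]{cdpms2016}, which is precisely your assembly of $(1)\Leftrightarrow(2)\Leftrightarrow(3)$ via the R{\aa}dstr\"{o}m embedding and $(3)\Leftrightarrow(4)$ via the direct variation estimate plus Theorem \ref{new}. You have merely written out the details the paper leaves implicit.
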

\begin{proof}
 It is an immediate consequence  of  Theorem \ref{new} if we proceed analogously to  \cite[Proposition 3.6]{cdpms2016}.
\qed
\end{proof}
\section{Decompositions}
In the study of the integrability of multifunctions it is important to decompose a multifunction as a sum of a selection that is integrable in the same sense and a multifunction that is integrable in a stronger sense than the original one
 (see for example \cite{ft,dpm-ill,dm2,cdpms2016,cdpms2016a,cdpms2016b,cdpms2018a}).
Using $Db$ or $DL$ conditions we are able to extend
 decomposition results and to write integrable multifunctions as a translation of a multifunction with its integral of finite variation.
\begin{thm}\label{p5}
Let $\vG:[0,1]\to{c(X)}$ be integrable in $cb(X)$ ($cwk(X)$ or $ck(X)$)  in the sense of one of the scalarly defined integrals. If $\vG$ possesses at least one selection integrable in the same way, then the following conditions are equivalent:
\begin{enumerate}
\item
$\vG$ satisfies the $DL$-condition (or $Db$ condition);
\item
$\vG=G+f$, where $f$ is a properly integrable selection of $\vG$, $G$ is Pettis integrable in $cb(X)$ ($cwk(X)$ or $ck(X)$) and $\ov{\int}_0^1D_G(t)\,dt<\infty$ ( and $G$ is bounded). In particular the indefinite integral of $G$ is of finite variation.
\end{enumerate}
\end{thm}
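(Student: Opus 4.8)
The statement is an equivalence, and the heart of the matter is the construction of the ``good'' summand $G$ from the selection $f$ furnished by the hypothesis. The natural candidate is the pointwise translate $G(t):=\vG(t)-f(t)$ (so that $i\circ G=i\circ\vG-\langle\cdot,f(t)\rangle$ under the R{\aa}dstr\"{o}m embedding), and the whole argument hinges on two elementary facts recorded in Section~\ref{sec:1}: the diameter is translation invariant, whence $D_G(t)=\mbox{diam}(\vG(t)-f(t))=D_{\vG}(t)$ for a.e. $t$, and $0\in G(t)$ forces $|G(t)|\le D_G(t)$.

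I would dispose of $(2)\Rightarrow(1)$ first, since it is immediate: if $\vG=G+f$ with $\ov{\int}_0^1 D_G(t)\,dt<\infty$ (resp. $G$ bounded), then $D_{\vG}=D_G$ a.e. gives $\ov{\int}_0^1 D_{\vG}(t)\,dt=\ov{\int}_0^1 D_G(t)\,dt<\infty$ (resp. ${\rm sup\,ess}_t\,D_{\vG}(t)={\rm sup\,ess}_t\,D_G(t)<\infty$), which is exactly the $DL$-condition (resp. the $Db$-condition).

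For the substantial implication $(1)\Rightarrow(2)$ the plan is to verify, in order, that $G$ is scalarly integrable in the prescribed sense, integrably bounded, Pettis integrable, and has an indefinite integral of finite variation. Scalar integrability is free: from $s(x^*,G(t))=s(x^*,\vG(t))-\langle x^*,f(t)\rangle$ and the integrability (Dunford, $HK$, $DK$, \dots) of the two summands, $s(x^*,G)$ is integrable in the same sense for every $x^*$, and the indefinite integral of $G$ is the translate of that of $\vG$ by $\int f$. Since $f(t)\in\vG(t)$ a.e. we have $0\in G(t)$, so $|G(t)|\le D_G(t)=D_{\vG}(t)$ a.e., and the $DL$-condition yields $\ov{\int}_0^1|G(t)|\,dt\le\ov{\int}_0^1 D_{\vG}(t)\,dt<\infty$, i.e. $G$ is integrably bounded (and bounded under $Db$). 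The crucial step is to promote this to Pettis integrability: integrable boundedness makes $\{s(x^*,G):x^*\in B_{X^*}\}$ uniformly integrable, and for $cwk(X)$- and $ck(X)$-valued multifunctions the El Amri--Hess criterion \cite{eh} converts scalar integrability plus uniform integrability into Pettis integrability in the same class (translation preserving weak compactness, respectively compactness), while the $cb(X)$ case is treated as in \cite{cdpms2018a}. Finally, for any integrable majorant $h\ge D_{\vG}$ and any interval $I$ one has $\|M_G(I)\|_h=\sup_{\|x^*\|\le1}\int_I s(x^*,G)\le\int_I h$; summing over a partition of $[0,1]$ and taking the infimum over such $h$ gives $\wt{M_G}([0,1])\le\ov{\int}_0^1 D_{\vG}(t)\,dt<\infty$, so the indefinite Pettis integral of $G$ is of finite variation, which finishes the decomposition.

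The real obstacle is precisely this Pettis-integrability upgrade. Scalar integrability together with a uniformly integrable family of support functions does not, by itself, keep the indefinite integral inside $X$ rather than $X^{**}$, so one must exploit the multivalued structure through the R{\aa}dstr\"{o}m embedding and the El Amri--Hess theorem; the $cb(X)$ case, which lacks weak compactness, is the most delicate and is the reason for invoking the boundedness results of \cite{cdpms2018a} in place of \cite{eh}.
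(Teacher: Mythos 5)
Your proof is correct, and it takes a more self-contained route than the paper's. The paper disposes of the theorem in three lines: it treats only the DKP case explicitly, invokes the decomposition theorem \cite[Theorem 3.5]{cdpms2018a} to produce $\vG=G+f$ with $G$ Pettis integrable and satisfying $DL$, and declares the finite variation of the integral of $G$ ``obvious''. You instead reconstruct the decomposition directly: you take the selection $f$ supplied by the hypothesis, set $G:=\vG-f$, use the two elementary facts ($D_G=D_{\vG}$ by translation invariance, and $0\in G(t)$ forces $|G(t)|\le D_G(t)$), and then upgrade scalar integrability to Pettis integrability via the El Amri--Hess uniform-integrability criterion \cite{eh} in the $cwk(X)$/$ck(X)$ cases, deferring only the $cb(X)$ case to the machinery of \cite{cdpms2018a}. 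What the paper's citation buys is precisely what you identify as the obstacle: the cited theorem already contains the delicate step of keeping the Dunford integral of $G$ inside $X$ when the values are merely bounded, and it covers all the scalarly defined integrals uniformly. What your argument buys is transparency: it shows why the $DL$/$Db$-condition is exactly what is needed, it actually proves the implication (2)$\Rightarrow$(1), which the paper passes over in silence, and it replaces ``obvious'' by the quantitative bound $\wt{M_G}([0,1])\le\ov{\int}_0^1 D_{\vG}(t)\,dt$. Two details deserve explicit mention if you write this up. First, for $ck(X)$-valued $\vG$, El Amri--Hess gives Pettis integrability with integrals in $cwk(X)$; to conclude that the integrals are compact sets you need the known result for compact-valued Pettis integrable multifunctions (see \cite{ckr2}). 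Second, the uniform integrability claim tacitly uses that each $s(x^*,G)$ is genuinely Lebesgue integrable: this follows because $s(x^*,G)=s(x^*,\vG)-\langle x^*,f\rangle$ is HK- or DK-integrable, hence measurable, and is dominated by an integrable majorant of $D_{\vG}$; that half-line of justification should not be omitted.
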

\begin{proof}
 Assume that $\vG$ is DP-integrable. Due to \cite[Theorem 3.5]{cdpms2018a} $\vG=G+f$, where $G$ is Pettis integrable, $f$ is Denjoy integrable and  $G$ satisfies the condition DL.  It is obvious that the Pettis integral of $G$ is of finite variation.
\qed
\end{proof}
\begin {rem}\label{r1}
\rm
Unfortunately, even if $G:[0,1]\to{ck(X)}$ is a positive multifunction that is Pettis integrable and its integral is of finite variation, the multifunction $G$ may not satisfy the DL condition. To see it let $X=\ell_2[0,1]$ and let $\{e_t:t\in(0,1]\}$ be its orthonormal system. If $G(t):=conv\{0,{e_t}/t\}$, then $s(x,G)=0$ a.e. for each separate $x\in\ell_2[0,1]$ and so the integral and its variation are equal  zero. However, $diam\{G(t)\}=1/t$ and so the DL-condition fails.
Moreover, $G$ is  not Henstock integrable.
 Indeed, let $\delta$ be any gauge and $\{(I_1,t_1),\ldots, (I_n,t_n)\}$ be a $\delta$-fine Perron partition of $[0,1]$.
 Assume that $0\in{I_1}$, then $t_1\leq|I_1|$. Hence $\lambda(I_1)/{t_1}\geq1$ for $t_1>0$ and so
 $
  \left\|\sum_{i\leq{n}} \frac{e_i}{t_i}\lambda(I_i)\right\|\geq 1.
 $
 Consider now the multifunction given by $H(t):=conv\{0,e_t\}$, where $X$ is as above. We are going to prove that $H$ is Birkhoff-integrable. Given $\ve>0$, let $n\in\N$ be such that $1/{\sqrt{n}}<\ve$ and $\delta$ be any gauge, pointwise less than $1/n$. If $\{(I_1,t_1),\ldots,(I_m,t_m)\}$ is a $\delta$-fine  partition of $[0,1]$ and $\{J_1,\ldots,J_n\}$ is the division of $[0,1]$ into closed intervals of the same length, then
\begin{eqnarray*}
 \biggl \|\sum_{i\leq{m}}e_i \lambda(I_i) \biggr\|&=& \biggl\|\sum_{i\leq{m}}\sum_{k\leq{n}}e_i \lambda(I_i\cap{J_k})\biggr\|=\biggl\|\sum_{k\leq{n}}\sum_{i\leq{m}}e_i \lambda(I_i\cap{J_k})\biggr\|\\
 &=&
  \biggl(\sum_{k\leq{n}}\sum_{i\leq{m}} \lambda(I_i\cap{J_k})^2\biggr)^{1/2}\leq1/{\sqrt{n}}<\ve\,.
\end{eqnarray*}
(We apply here the inequality $\sum_{i}a_i^2\leq (\sum_{
}a_i)^2$. For each fixed $k\leq{n}$ we take as $a_i$ the number $\lambda(I_i\cap{J_k})$
). If $\delta$ is measurable, then we get Birkhoff integrability of $H$.
\end{rem}
Some additional results will be given now, in order to get decompositions with gauge integrable multifunctions.
\begin{thm}\label{aB}
Let $\vG:[0,1]\to cwk(X)$ satisfy  $DL$-condition, and assume that $\vG$ is  $\mathcal{H}$-integrable (or H-integrable). Then we have
$\vG=G+f,$
where $f\in\mathcal{S_H}(\vG)$ ($f\in\mathcal{S}_H(\vG)$) is arbitrary and $G$ is an abs-Birkhoff integrable multifunction.
 In particular the integral of $G$ has finite variation.
 If $\vG$ is Bochner measurable, then $G$ is also variationally Henstock integrable.
\end{thm}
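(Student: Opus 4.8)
The plan is to obtain the decomposition by translating $\vG$ so that its values contain the origin. Fix an arbitrary $f\in\mathcal{S}_{\mcH}(\vG)$ and set $G(t):=\vG(t)-f(t)$. Since $f(t)\in\vG(t)$ and $\vG(t)\in cwk(X)$, each value $G(t)$ again lies in $cwk(X)$ and contains $0$, while $\vG=G+f$ holds trivially. Translation does not affect the diameter, so $D_G(t)=D_{\vG}(t)$, and because $0\in G(t)$ a.e. we get $\|G(t)\|_h\le D_G(t)=D_{\vG}(t)$ a.e. The $DL$-condition $\ov{\int}_0^1 D_{\vG}(t)\,dt<\infty$ therefore furnishes an integrable majorant $h\ge D_{\vG}\ge\|G\|_h$ a.e., so that $G$ is integrably bounded.

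Next I would verify that $G$ is $\mathcal{H}$-integrable and that the interval variation of its integral is finite. For the first point, passing through the R{\aa}dstr\"{o}m embedding one has $i\circ G=i\circ\vG-i\circ f$ (translation acts on support functions by $s(x^*,\vG(t)-f(t))=s(x^*,\vG(t))-\langle x^*,f(t)\rangle$); as $i\circ\vG$ and $i\circ f$ are both $\mathcal{H}$-integrable in $\ell_{\infty}(B_{X^*})$, so is their difference, whence $G$ is $\mathcal{H}$-integrable with $\Phi_G=\Phi_{\vG}-\Phi_f$. For the second point, fix $I\in\mcI$ and choose a gauge $\delta$ fine simultaneously for the Henstock integral of $G$ on $I$ and for the gauge integral of the $L_1$ function $h$; for a $\delta$-fine partition $\{(I_j,t_j)\}$ of $I$ one gets $\|\Phi_G(I)\|_h\le d_H(\Phi_G(I),\sum_j G(t_j)\lambda(I_j))+\sum_j\|G(t_j)\|_h\lambda(I_j)<\varepsilon+\sum_j h(t_j)\lambda(I_j)$, and the last sum tends to $\int_I h$. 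Letting $\varepsilon\to 0$ yields $\|\Phi_G(I)\|_h\le\int_I h$ for every $I$, hence $\wt{\Phi_G}([0,1])\le\int_0^1 h<\infty$. By Theorem~\ref{T11}, $G$ is Birkhoff integrable; together with the integrable boundedness found above this is precisely abs-Birkhoff integrability, and the same estimate shows the integral of $G$ has finite variation.

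For the last assertion, assume in addition that $\vG$ is Bochner measurable. Then $G=\vG-f$ is Bochner measurable as well, it is Pettis integrable (being Birkhoff integrable), and its Pettis integral has finite variation by the previous paragraph. Theorem~\ref{new} then gives that $G$ is integrably bounded, and the Proposition following Theorem~\ref{new} (equivalence of Bochner measurability together with integrable boundedness and variational McShane integrability) shows that $G$ is variationally McShane, hence variationally Henstock, integrable.

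The step I expect to be the main obstacle is the interval-variation estimate $\|\Phi_G(I)\|_h\le\int_I h$: since $D_{\vG}$ is controlled only through the upper integral, one must first produce an honest $L_1$ majorant $h$ and then argue with a gauge fine for both $G$ and $h$, taking the usual Saks--Henstock care of the null set on which $\|G(t)\|_h\le h(t)$ may fail at tag points. Everything else reduces to bookkeeping through the embedding $i$ and to the application of the cited theorems.
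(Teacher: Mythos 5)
Your treatment of the $\mcH$-case is correct, but it follows a genuinely different route from the paper's. You show that $G=\vG-f$ is itself $\mcH$-integrable (through the R{\aa}dstr\"om embedding), prove $\|\Phi_G(I)\|_h\le\int_I h$ by a gauge argument (the null-set care you flag is the standard trick and does work, since every $\|G(t)\|_h$ is finite), conclude $\wt{\Phi_G}([0,1])<\infty$, apply Theorem \ref{T11} to get Birkhoff integrability, and then use the equivalence ``Birkhoff together with $\ov{\int}\|\cdot\|\,d\lambda<\infty$ equals abs-Birkhoff''. Be aware that this last step is a theorem, not the definition: it is the characterization of \cite[Theorem 2]{ncm}, recorded as known in the Question of the paper. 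The paper's own proof is shorter: $i\circ\vG$ and $f$ are Riemann-measurable by \cite[Theorem 4]{nara}, hence so is $i\circ G$, and Riemann measurability plus integrable boundedness is exactly absolute Birkhoff integrability by \cite[Theorem 2]{ncm}. Your detour avoids invoking Riemann measurability directly, at the cost described next.

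The first genuine gap: you never address the parenthetical $H$-integrable case, and your method breaks there. If $\vG$ is only Henstock integrable and $f\in\mathcal{S}_H(\vG)$, then $G$ is Henstock integrable with primitive of finite interval variation, but Theorem \ref{T11} then yields only \emph{McShane} integrability, and McShane integrability plus integrable boundedness does not give Birkhoff (hence not abs-Birkhoff) integrability: the missing ingredient is precisely measurable gauges/Riemann measurability, which Henstock integrability does not supply. This is why the paper treats that case separately, invoking the decomposition theorem \cite[Theorem 3.5]{cdpms2018a}, which directly provides a Birkhoff integrable $G$; integrable boundedness then upgrades it to abs-Birkhoff as before.

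The second genuine gap is your claim, in the last part, that $G=\vG-f$ is Bochner measurable whenever $\vG$ is. This would require $f$ to be strongly measurable, which an arbitrary $f\in\mathcal{S}_{\mcH}(\vG)$ need not be. Concretely, let $X=\ell_2[0,1]$, $\vG\equiv B_X$ (constant, hence Bochner measurable, $Db$, and $\mcH$-integrable), and $f(t)=e_t$ for $t\in(0,1]$, $f(0)=0$: by the computation in Remark \ref{r1}, $f$ is Birkhoff, hence $\mcH$-integrable, yet $\|f(t)-f(s)\|=\sqrt{2}$ for $t\neq s$, so $f$ is not strongly measurable and $G(t)=B_X-e_t$ has $d_H$-nonseparable range. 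Worse, this $G$ is not even vH-integrable: $\Phi_G(I)=\lambda(I)B_X$, while $d_H\left(\Phi_G(I_i),G(t_i)\lambda(I_i)\right)=\lambda(I_i)\|e_{t_i}\|$, so every $\delta$-fine Perron partition gives $\sum_i d_H\left(\Phi_G(I_i),G(t_i)\lambda(I_i)\right)\ge 1-2\delta(0)$. Thus for an arbitrary selection the vH conclusion cannot be reached your way (indeed the example shows it can simply fail, so the statement needs $f$ strongly measurable or an existential reading); your chain through Theorem \ref{new} and the subsequent Proposition is fine exactly when $f$, and hence $G$, is strongly measurable, whereas the paper derives this part from \cite[Corollary 4.1]{bdm}, a justification that is subject to the same caveat.
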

\begin{proof}
  Assume that $\vG$ is $\mathcal{H}$-integrable. It is known (see \cite[Theorem 3.1]{dp}) that $\vG$ has an $\mathcal{H}$-integrable selection $f$. Thanks to \cite[Theorem 4]{nara}, both $i \circ \vG$ and $f$ are Riemann-measurable. So, if $G:=\vG-f$, it is clear that $i \circ G$ is Riemann-measurable too. Moreover, thanks to the $DL$-condition, the function $t\mapsto \|i \circ G(t)\|$ is integrably bounded, i.e.
$\overline{\int}_0^1  \|i \circ G(t)\| dt<+\infty$
since
$\|i \circ G(t)\|=\sup\{\|u\|_X: u\in G(t)\}=\sup\{\|v-f(t)\|_X: v\in \vG(t)\}\leq \mbox{diam} (\vG(t))$.
So, $i \circ G$ is Riemann-measurable and integrably bounded, which means that $i \circ G$ (and so $G$) is absolutely Birkhoff integrable, thanks to \cite[Theorem 2]{ncm}.
\\
 Assume now that $\vG$ is $H$-integrable. Then, according to  \cite[Theorem 3.5]{cdpms2018a}  $\vG=G+f$, where $G$ is Birkhoff integrable. By the assumption $G$ satisfies the DL-condition. Hence again the function $t\mapsto \|i \circ G(t)\|$ is integrably bounded. Consequently, $i\circ{G}$ is absolutely Birkhoff integrable and hence also $G$. The vH-integrability of $G$ follows from \cite[Corollary 4.1]{bdm}, since $G$ is Pettis integrable.
\qed \end{proof}

A similar result can be given also for Birkhoff integrable functions $\vG:[0,1]\to{cwk(X)}$: the proof is essentially the same but instead of \cite{dp} we invoke  \cite[Theorem 3.4]{cdpms2016}.
\begin{prop}\label{df1}
Let $\vG:[0,1]\to cwk(X)$ satisfy  $DL$-condition, and assume that $\vG$ is Birkhoff integrable. Then we have
$\vG=G+f,$
where $f$ is any Birkhoff integrable selection of $\vG$, and $G$ is an abs-Birkhoff integrable multifunction.  In particular the integral of $G$ has finite variation.
\end{prop}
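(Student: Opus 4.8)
The plan is to mimic the proof of Theorem~\ref{aB} almost verbatim, since Proposition~\ref{df1} differs only in that $\vG$ is now assumed Birkhoff integrable rather than $\mathcal H$- or $H$-integrable, and the remark preceding the statement already signals that the only substantive change is the replacement of the selection-existence input \cite{dp} by \cite[Theorem 3.4]{cdpms2016}. First I would invoke \cite[Theorem 3.4]{cdpms2016} to produce a Birkhoff integrable selection $f$ of $\vG$; this is the analogue of the $\mathcal H$-integrable selection obtained from \cite{dp} in the first line of the proof of Theorem~\ref{aB}. Setting $G:=\vG-f$, the goal is to show that $G$ is absolutely Birkhoff integrable, and then the finite variation of its integral follows, exactly as in Theorem~\ref{aB}.

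The core of the argument is the estimate on the norm of $i\circ G$. I would reproduce the chain
\[
\|i\circ G(t)\|=\sup\{\|v-f(t)\|_X:v\in\vG(t)\}\leq \mbox{diam}(\vG(t))=D_{\vG}(t),
\]
so that the $DL$-condition $\overline{\int}_0^1 D_{\vG}(t)\,dt<+\infty$ forces $t\mapsto\|i\circ G(t)\|$ to be integrably bounded in the sense that $\overline{\int}_0^1\|i\circ G(t)\|\,dt<+\infty$. This is precisely where the $DL$-hypothesis is consumed, and it is the one genuinely load-bearing step of the proof; everything else is bookkeeping. The point is that absolute Birkhoff integrability reduces, via the R{\aa}dstr\"{o}m embedding, to Riemann-measurability together with an integrable norm bound.

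To close the argument I would verify Riemann-measurability of $i\circ G$. Here a small care is needed: the proof of Theorem~\ref{aB} cites \cite[Theorem 4]{nara} to get Riemann-measurability of $i\circ\vG$ and of $f$ in the $\mathcal H$-case. In the present Birkhoff setting the measurable-gauge integrability of $\vG$ should itself yield Riemann-measurability of $i\circ\vG$ (and of the Birkhoff integrable selection $f$), whence $i\circ G=i\circ\vG-f$ is Riemann-measurable as a difference. Combining Riemann-measurability with the integrable norm bound, an application of \cite[Theorem 2]{ncm} gives that $i\circ G$, and therefore $G$, is absolutely Birkhoff integrable. The finite variation of the integral of $G$ is then immediate from absolute Birkhoff integrability. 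I expect the only potentially delicate point to be confirming that the Riemann-measurability input, which in Theorem~\ref{aB} came through \cite[Theorem 4]{nara} for $\mathcal H$-integrable data, transfers cleanly to the Birkhoff case; but since Birkhoff integrability is the measurable-gauge version of McShane integrability and hence at least as strong as the Riemann-measurability we need, this should cause no real obstacle.
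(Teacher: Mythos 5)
Your proof is correct and is essentially the paper's own argument: the paper proves Proposition \ref{df1} precisely by this route, stating that the proof of Theorem \ref{aB} carries over once \cite[Theorem 3.4]{cdpms2016} replaces \cite{dp} as the source of the integrable selection. Your observation that Birkhoff integrability is at least as strong as $\mathcal{H}$-integrability, so that Riemann-measurability (via \cite[Theorem 4]{nara}) still holds, correctly settles the one detail the paper leaves implicit.
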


\begin{que} \rm
Assume that $f:[0,1]\to{X}$ is Birkhoff integrable and the classical variation of the indefinite integral is finite. Is $f$ absolutely Birkhoff integrable? That is, do we have
 $\ov{\int}_0^1\|f(t)\|\,dt<\infty$?
A partial answer is  contained in \cite[Corollary 2]{ncm}. \\
Another way, does there exist a Birkhoff integrable $f$ that is scalarly equivalent to zero and $\ov{\int}_0^1\|f(t)\|\,dt=\infty$?
Recall that $G$ from Remark \ref{r1} is not Birkhoff integrable. \\
Fremlin proved that a Birkhoff integrable function is properly measurable in the Talagrand sense. It is known that $f$ is Talagrand integrable if and only if $f$ is properly measurable and  $\ov{\int}\|f\|\,d\lambda<\infty$. Then, it is known that $f$ is absolutely Birkhoff integrable  if and only if it is Riemann measurable and $\ov{\int}\|f\|\,d\lambda<\infty$ if and only if $f$ is Birkhoff integrable and $\ov{\int}\|f\|\,d\lambda<\infty$. Thus, if $f$ is  absolutely Birkhoff integrable, then $f$ is also Talagrand integrable. The converse result fails by \cite[Example 3C]{FM} (where a function $f: [0,1] \to \ell_{\infty}(\mathbb{N})$ is shown, which is Talagrand but not even McShane integrable).
\end{que}
It is also possible to obtain decompositions where the multifunction $G$ turns out to be   variationally McShane integrable, as follows.
\begin{prop}\label{df2}
Let $\vG:[0,1]\to cwk(X)$ satisfy  $DL$-condition, and assume that $\vG$ is  Bochner measurable. Then we have
$\vG=G+f,$
where $f$ is any strongly measurable selection of $\vG$, and $G$ is a   variationally McShane integrable multifunction.
\end{prop}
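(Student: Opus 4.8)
The plan is to follow the pattern of Theorem \ref{aB} and Proposition \ref{df1}, but to cash in the stronger hypothesis of Bochner measurability (in place of mere $\mcH$- or $H$-integrability) so that the complementary summand $G$ becomes \emph{variationally McShane} integrable rather than only absolutely Birkhoff integrable. Since $\vG$ is Bochner measurable and $cwk(X)$-valued, it admits a strongly measurable selection $f$ (this follows from measurability via a Castaing-type representation of measurable selections, and is in any case presupposed by the statement). Fixing any such $f$, I would set $G:=\vG-f$, so that $\vG=G+f$; because $f(t)\in\vG(t)$, each value $G(t)=\vG(t)-f(t)$ is a translate of a weakly compact convex set, hence again weakly compact and convex and containing $0$. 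Thus $G:[0,1]\to cwk(X)$ is well defined.

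First I would check that $G$ is Bochner measurable. Using that the R{\aa}dstr\"{o}m embedding $i$ carries Minkowski addition into ordinary addition in $\ell_\infty(B_{X^*})$ and sends a singleton $\{f(t)\}$ to the functional $\langle\,\cdot\,,f(t)\rangle$, one has pointwise $i\circ G=i\circ\vG-\langle\,\cdot\,,f\rangle$. The right-hand side is a difference of two strongly measurable maps into $\ell_\infty(B_{X^*})$ — the first because $\vG$ is Bochner measurable, the second because $f$ is strongly measurable — so $i\circ G$ is strongly measurable and $G$ is Bochner measurable.

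Next I would verify that $G$ is integrably bounded. For every $t$ one has
$$
\|G(t)\|_h=\sup\{\|v-f(t)\|:v\in\vG(t)\}\le\mbox{diam}(\vG(t))=D_{\vG}(t).
$$
Since $G$ is Bochner measurable, the scalar map $t\mapsto\|G(t)\|_h=\|i\circ G(t)\|$ is measurable; being dominated by $D_{\vG}$, whose upper integral is finite by the $DL$-condition, it satisfies $\int_0^1\|G(t)\|_h\,dt\le\ov{\int}_0^1 D_{\vG}(t)\,dt<\infty$. Hence $h(t):=\|G(t)\|_h$ lies in $L_1[0,1]$ and witnesses integrable boundedness of $G$.

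Finally, $G$ is Bochner measurable and integrably bounded, so by the characterization of variational McShane integrability recorded above (the equivalence of conditions $(1)$ and $(3)$ in the Proposition preceding Theorem \ref{aB}, itself a consequence of \cite[Proposition 3.6]{cdpms2016} and Theorem \ref{new}) the multifunction $G$ is variationally McShane integrable, which yields the asserted decomposition. The only point demanding a little care is the measurability of $t\mapsto\|G(t)\|_h$ together with its domination by $D_{\vG}$; beyond this, everything is a routine transcription through $i$ of the known real- and vector-valued facts, and no genuinely new obstacle arises relative to what was already handled in Theorem \ref{aB}.
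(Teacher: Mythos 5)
Your proposal is correct and follows essentially the same route as the paper's own proof: the same decomposition $G=\vG-f$ with $f$ an arbitrary strongly measurable selection, the same key estimate $\|G(t)\|_h\le D_{\vG}(t)$ giving integrable boundedness from the $DL$-condition, and the same conclusion that a Bochner measurable, integrably bounded multifunction is variationally McShane integrable. The extra details you supply (the identity $i\circ G=i\circ\vG-\langle\,\cdot\,,f\rangle$ and the measurability of $t\mapsto\|G(t)\|_h$) are points the paper passes over with ``clearly,'' so nothing differs in substance.
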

\begin{proof}
 Let $f$ be any strongly measurable selection of $\vG$, and set $G=\vG-f$. Then clearly $G$ is Bochner measurable. Moreover, since $\vG$ satisfies the $DL$ condition and $f$ is  a selection from $\vG$, $i \circ G$ is integrably bounded. Then $i \circ G$ is strongly measurable and integrably bounded, and therefore  variationally McShane integrable.
Of course this implies that also $G$ is  integrable.
\qed
\end{proof}

\begin{prop}\label{p9}
If $\vG: [0,1] \to cwk(X)$ is Henstock ($\mathcal H$, variationally Henstock, Pettis, McShane,  Birkhoff) integrable, then $\Gamma$ cannot be, in general,  written as $G + f$, where $G $ is   variationally McShane integrable and $f$ is integrable in the same way as $\vG$.
\end{prop}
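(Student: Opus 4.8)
The plan is to read the statement as a non-existence result and to isolate an obstruction carried by the summand $G$. Suppose $\vG=G+f$ as in the putative decomposition, with $G$ variationally McShane integrable and $f$ integrable in the same way as $\vG$. Since $f$ is point-valued, $G(t)=\vG(t)-f(t)$ is a translate of $\vG(t)$, so $D_{G}(t)=D_{\vG}(t)$ for every $t$. By the characterization of variational McShane integrability established above, $G$ is Bochner measurable and integrably bounded, hence there is $k\in L_1[0,1]$ with $\|G(t)\|_h\le|k(t)|$ a.e.; consequently
$$
\ov{\int}_0^1 D_{\vG}(t)\,dt=\ov{\int}_0^1 D_{G}(t)\,dt\le 2\ov{\int}_0^1\|G(t)\|_h\,dt\le 2\|k\|_1<\infty,
$$
so such a decomposition forces $\vG$ to satisfy the $DL$-condition. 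It therefore suffices to exhibit, for each of the listed integrals, a $cwk(X)$-valued multifunction that is integrable in that sense but fails the $DL$-condition: for any such $\vG$ no decomposition of the required form can exist.

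For the Pettis case I would simply reuse $G$ from Remark \ref{r1}: in $X=\ell_2[0,1]$ the multifunction $t\mapsto conv\{0,e_t/t\}$ is Pettis integrable with integral identically $\{0\}$, yet $D_{\vG}(t)=1/t$, so $\ov{\int}_0^1 D_{\vG}=\infty$ and $DL$ fails. For the gauge integrals the idea is to manufacture segment-valued multifunctions $\vG=conv\{0,\varphi\}$, where $\varphi:[0,1]\to X$ is single-valued, integrable in the prescribed sense, but with $\ov{\int}_0^1\|\varphi(t)\|\,dt=\infty$ (equivalently, strongly measurable but not Bochner). Since $D_{conv\{0,\varphi(t)\}}=\|\varphi(t)\|$, any such $\vG$ fails $DL$. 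Concretely, for the variational Henstock (hence Henstock) case I would take $\varphi$ to be the Di Piazza--Marraffa function of \cite{dp-ma}, which is variationally Henstock and Pettis but not Bochner, whence $\ov{\int}_0^1\|\varphi\|=\infty$; for the Birkhoff case (which also yields McShane, $\mathcal H$ and Henstock) a block function $\varphi=\sum_n c_n e_n\chi_{A_n}$ in $\ell_2$, with $\lambda(A_n)=2^{-n}$ and $c_n=2^n/n$, is strongly measurable and Birkhoff integrable while $\int_0^1\|\varphi\|=\sum_n 1/n=\infty$.

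The main obstacle is to confirm that passing to the segment $conv\{0,\varphi\}$ preserves integrability of the prescribed type while leaving the diameter untouched. For the variational Henstock case this is painless: the Lipschitz estimate $d_H(conv\{0,a\},conv\{0,b\})\le\|a-b\|$ shows that every variational Riemann sum for $conv\{0,\varphi\}$ is dominated by the corresponding sum for $\varphi$, so $\vG=conv\{0,\varphi\}$ is variationally Henstock with indefinite integral $conv\{0,(vH)\int\varphi\}$. For the Birkhoff case the verification is the genuine technical point: through the R{\aa}dstr\"{o}m embedding one has $i\circ\vG=(\langle\cdot,\varphi\rangle)^+\in\ell_{\infty}(B_{X^*})$, again a block function, and one must check that its series of integrals $\sum_n (1/n)(\langle\cdot,e_n\rangle)^+$ converges unconditionally in $\ell_{\infty}(B_{X^*})$ — which it does, since the tail suprema $\sup_{\|x\|\le1}\sum_{n>N}(1/n)(x_n)^+$ are controlled by $(\sum_{n>N}1/n^2)^{1/2}\to0$ — so that $i\circ\vG$, and hence $\vG$, is Birkhoff integrable without being integrably bounded. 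Assembling these three examples (the Pettis one from Remark \ref{r1}, the variational Henstock/Henstock one, and the Birkhoff one covering the remaining McShane-type integrals) produces, in each listed case, an integrable $\vG$ admitting no decomposition $G+f$ of the stated form.
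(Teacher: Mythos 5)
Your reduction is correct, and it is a genuinely different route from the paper's: by the Proposition following Theorem \ref{new}, a variationally McShane integrable $G$ is Bochner measurable and integrably bounded, and since $G(t)=\vG(t)-f(t)$ is a translate of $\vG(t)$ you get $D_{\vG}=D_{G}\le 2\|G\|_h$ with $\|G\|_h$ dominated by an $L_1$ function, so \emph{any} decomposition forces the $DL$-condition. This excludes every decomposition at once, whereas the paper's proof exhibits one example, $\vG={\rm conv}\{0,f\}$ with $f$ the Di Piazza--Marraffa function, quotes \cite[Example 4.7, Theorem 4.3]{cdpms2016} and \cite[Remark 5.4]{cdpms2016b} for its integrability properties, and only discusses the particular summand $G=\vG-f$. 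Your Pettis witness (Remark \ref{r1}) is fine, and so is your Birkhoff block example: $i\circ\vG$ is countably valued with disjoint supports, the Cauchy--Schwarz bound $\sup_{\|x\|\le 1}\sum_{n\in F}\tfrac1n x_n^+\le(\sum_{n\in F}n^{-2})^{1/2}$ over finite sets $F$ gives unconditional convergence of the series of integrals, hence Birkhoff integrability of $i\circ\vG$ and so of $\vG$; since Birkhoff integrability implies McShane, $\mathcal H$ and Henstock integrability, that single example also covers those cases.

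The genuine gap is the variational Henstock step. The Lipschitz bound $d_H({\rm conv}\{0,a\},{\rm conv}\{0,b\})\le\|a-b\|$ is true, but you test the variational sums against the interval function $\Psi(I)={\rm conv}\{0,(vH)\int_I\varphi\}$, which is \emph{not additive}: Minkowski sums of non-parallel segments are parallelograms, not segments. By the paper's definition the vH primitive must be $\Phi_{\vG}(I)=\Phi_{\vG\chi_I}[0,1]$, an additive interval multimeasure, so $\Psi$ cannot be the indefinite integral you claim it is. Concretely, for $\varphi=a\chi_{[0,1/2)}+b\chi_{[1/2,1]}$ with $a\perp b$, $\|a\|=\|b\|=1$, the integral of ${\rm conv}\{0,\varphi\}$ over $[0,1]$ (this is a simple multifunction, so all the integrals agree) equals $\tfrac12{\rm conv}\{0,a\}+\tfrac12{\rm conv}\{0,b\}$, whose Hausdorff distance from your candidate ${\rm conv}\{0,(a+b)/2\}$ is $1/(2\sqrt2)>0$, even though your variational sums against $\Psi$ are indeed small. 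Moreover, smallness of variational sums against a non-additive interval function does not imply integrability at all: take $h:[0,1]\to\R$ with $h=(-1)^n n(n+1)$ on $(1/(n+1),1/n]$ and $h(0)=0$; then $h$ is not HK integrable, since $\int_{1/(n+1)}^{1/n}h=(-1)^n$ makes the improper integrals oscillate, yet $\Psi(I):=\int_I h$ for $0\notin I$ and $\Psi(I):=0$ for $0\in I$ admits arbitrarily small variational sums once the gauge forces the interval containing $0$ to carry the tag $0$. So your argument does not establish vH integrability of ${\rm conv}\{0,\varphi\}$. The conclusion you need is true, but it is exactly the nontrivial content of \cite[Example 4.7]{cdpms2016}; to close the gap, cite it (as the paper does) or reprove it using the specific structure of the Di Piazza--Marraffa function rather than a generic Lipschitz estimate.
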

\begin{proof}
Take $f$ as in \cite{dp-ma}; then $f$ is vH and Pettis integrable, but not Bochner integrable. Let $\vG = {\rm conv} \{0, f(t) \}$, then $\vG$ is vH-integrable, Pettis but not  Bochner integrable, as shown in \cite[Example 4.7]{cdpms2016}. By \cite[Theorem 4.3 (a) and (c)]{cdpms2016}, then $\vG$ is also McShane integrable  and then Birkhoff integrable, since $\vG$ is Bochner measurable. Following the same motivations of \cite[Remark 5.4]{cdpms2016b} the multifunction $G = \vG - f $ is not    variationally McShane integrable.
\qed
\end{proof}
Almost nothing is known on possible decompositions of a Pettis integrable multifunctions. We have only the following negative result:
\begin {ex}\label{ex17}
\rm
Let $\vG:[0,1]\to cwk(X)$ be Pettis integrable. Assume that $M_{\vG}(\mcL)$ is not relatively compact in the Hausdorff metric. Then $\vG$ cannot be represented as
$\vG=G+f$, where $G$ is McShane integrable and
$f$ is a Pettis integrable selection of $\vG$. In such a case $i\circ\vG$ is not Pettis integrable.
\end{ex}
\begin{proof}
 Suppose that such a decomposition exists. Then, since $G$ is McShane integrable, the function $i \circ G$  is also McShane integrable and consequently it has relatively norm compact range.That is however equivalent to the norm relative compactness of   $M_G(\mcL)$ in $d_H$.  But then $G$ can be approximated by simple functions (see \cite[Theorem 2.3]{mu8}). Since the integral of $f$ is norm relatively compact (because Lebesgue measure is perfect) also $f$ can be approximated by simple functions in the Pettis norm (see \cite[Theorem 9.1]{mu}).  As a result the multifunction $\vG$ can be approximated by simple multifunctions, which is impossible, since its range  $M_{\vG}(\mcL)$ is not relatively compact in the Hausdorff metric. (see  \cite[Theorem 2.3]{mu8}). The non-integrability of $i\circ\vG$ is a consequence of perfectness of Lebesgue measure. Indeed, the range of the integral of a Pettis integrable function on $[0,1]$ (or on any perfect measure space) is norm relatively compact (\cite[3J]{ft}).
\qed
\end{proof}

\section*{Acknowledgements}
This research was partially supported by Grant  "Metodi di analisi reale per l'appros\-simazione attraverso operatori discreti e applicazioni” (2019) of GNAMPA -- INDAM (Italy), by University of Perugia --  Fondo Ricerca di Base 2019 ``Integrazione, Approssimazione, Analisi Nonlineare e loro Applicazioni``, by University of Palermo  --  Fondo Ricerca di Base 2019 and by Progetto Fondazione Cassa di Risparmio cod. nr. 2018.0419.021 (title: Metodi e Processi di Intelligenza
artificiale per lo sviluppo di una banca di immagini mediche per fini diagnostici (B.I.M.)).

\small

Domenico Candeloro and Anna Rita Sambucini,
               Department of Mathematics and Computer Sciences, University of  Perugia 1, Via Vanvitelli - 06123, Perugia (Italy),
               orcid ID: 0000-0003-0526-5334,  0000-0003-0161-8729
              \email{candelor@dmi.unipg.it, anna.sambucini@unipg.it}\\

   Luisa Di Piazza, 
    Department of Mathematics and Computer Sciences, University of Palermo,  Via 	Archirafi 34, 90123 Palermo (Italy)  orcid ID: 0000-0002-9283-5157				  \email{luisa.dipiazza@unipa.it}
\\

	Kazimierz Musia\l,
	Institute of Mathematics, Wroc{\l}aw University,  Pl. 			Grunwaldzki  2/4, 50-384 Wroc{\l}aw (Poland) orcid ID: 0000-0002-6443-2043
	\email{kazimierz.musial@math.uni.wroc.pl}

\end{document}